\documentclass[12pt]{amsart}
\usepackage{amssymb,amsmath,amsthm,latexsym}
\usepackage{mathrsfs}
\usepackage{a4wide}
\usepackage[usenames,dvipsnames]{color}
\usepackage{euscript}
\usepackage{graphicx}
\usepackage{mdwlist}
\usepackage{mathtools,dsfont,wasysym}
\usepackage[dvipsnames]{xcolor}

\DeclareFontFamily{U}{mathx}{\hyphenchar\font45}
\DeclareFontShape{U}{mathx}{m}{n}{
      <5> <6> <7> <8> <9> <10>
      <10.95> <12> <14.4> <17.28> <20.74> <24.88>
      mathx10
      }{}

\newcommand{\nn}[1]{{\vert\kern-0.25ex\vert\kern-0.25ex\vert #1 
    \vert\kern-0.25ex\vert\kern-0.25ex\vert}}

\newtagform{scroman}[][]
 
\makeatother

\newcommand{\vertiii}[1]{{\left\vert\kern-0.25ex\left\vert\kern-0.25ex\left\vert #1 
    \right\vert\kern-0.25ex\right\vert\kern-0.25ex\right\vert}}
\renewcommand{\leq}{\leqslant}
\renewcommand{\geq}{\geqslant}
\newcounter{smallromans}

{\end{list}}

\newcounter{smallromansdash}

{\end{list}}

\newcounter{bigromans} 
  {\end{list}}

\usepackage[T1]{fontenc} 
\usepackage{url}

\usepackage{color}

\usepackage[unicode=true, pdfusetitle,
 bookmarks=true,bookmarksnumbered=true,bookmarksopen=true,bookmarksopenlevel=1,
 breaklinks=false,pdfborder={0 0 1},backref=false,colorlinks=false]{hyperref}
\hypersetup{pdfstartview=FitH}





\numberwithin{equation}{section}

\newtheorem{thm}{Theorem}[section]
\newtheorem{defi}[thm]{Definition}
\newtheorem{lemma}[thm]{Lemma}
\newtheorem{prop}[thm]{Proposition}

\newtheorem{qtn}[thm]{Question}

\theoremstyle{remark}

\theoremstyle{plain}


\newtheorem*{Mthm*}{Main Theorem}


 \DeclareMathOperator{\spn}{span}








\newcommand{\coo}{\mathrm{c}_{00}}
\newcommand{\co}{\mathrm{c}_0}

\pagestyle{headings}

\begin{document}

\title[Monotonticity and well-separatedness]{A note on asymptotically monotone basic sequences and well-separated sets}

\author[C. S. Barroso]{C. S. Barroso$^*$}
\address{Department of Mathematics, Federal University of Cear\'a, Fortaleza 60455-360, Brazil}
\email{cleonbar@mat.ufc.br}

\thanks{$^*$ Corresponding author.}
\thanks{The research was partially supported by FUNCAP/CNPq/PRONEX Grant 00068.01.00/15.}

\date{\today}

\subjclass[2010]{ Primary 46B04, 46B20; Secondary 46B15, 46B22}

\keywords{Monotone basic sequences, asymptotic monotonicity, Pe\l czy\'nski's selection principle, symmetrically well-separated unit vectors.}

\begin{abstract}
We remark that if $X$ is an infinite dimensional Banach space then every seminormalized weakly null sequence in $X$ has an asymptotic monotone basic subsequence. We also observe that if $X$ contains an isomorphic copy of $\ell_1$, then for every $\varepsilon>0$ there exist a $(1 +\varepsilon)$-equivalent norm $\vertiii{\cdot}$ on $X$ such that the unit sphere $(S_{(X, \vertiii{\cdot})})$ contains a normalized bimonotone basic sequences which is symmetrically $2$-separated.
\end{abstract}

\maketitle


\section{Introduction}\label{sec:intro}
It is an open problem to know whether every infinite dimensional Banach space contains a monotone basic sequence (see \cite[Problem 1.1, p. 60]{Si3}, \cite[p. 220]{FHHMZ} and \cite[p. 6]{HSVZ}). A sequence $(x_n)$ in a Banach space $X$ is called a basic sequence if it is a Schauder basis for its closed linear span, denoted by $[(x_n)]$. Recall also that the partial sums associated to a basic sequence $(x_n)$ is the sequence of projections $(P_n)$ on $[(x_n)]$ defined by $P_n\colon \sum_{i=1}^\infty a_i x_i\mapsto \sum_{i=1}^n a_i x_i$. One can prove using standard arguments in Banach space theory that $\sup_n\| P_n\|< \infty$. This number is referred in the literature as the basic constant of $(x_n)$. In \cite{Pel} Pe\l czy\'nski proved that for every $\varepsilon\in (0,1)$, every normalized weakly null sequence in $X$ has a basic subsequence with basic constant $1+\varepsilon$. This result is also known as Bessaga-Pe\l czy\'nski's selection principle, after \cite{BP} (see also, e.g., \cite[p. 14]{AK}). A basic sequence $(x_n)$ is called monotone if $\| P_n\|=1$ for all $n\in\mathbb{N}$. A bimonotone basic sequence is a monotone basic sequence so that the norm of all tail projections is exactly $1$; that is, $\| I - P_n\|=1$ for all $n\in \mathbb{N}$. Let us recall yet that $(x_n)$ is said to be asymptotically monotone if $\| P_n\|\to 1$. This slightly weaker concept of monotonicity was introduced by V. D. Milman \cite{M} (see also \cite[p. 93]{Day}). Naturally, one may also consider asymptotic bimonotonicity (i.e. $\lim_n \| I - P_n\|=\lim_n\|P_n\|=1$). 

Monotone basic sequences are of course asymptotically monotone, the former though being overall more hard to detect than the latter. In fact, a few important works have done regarding the existence of basic sequences with asymptotic monotonicity properties. For instance, it is well known that every Banach space has an asymptotically monotone basic sequence  (cf. \cite{Day}, see also \cite[Theorem 1.2, p. 49]{Si3}, \cite[Theorem 1.20]{HSVZ}). Very recently, H\'ajek, Kania and Russo \cite{HKR} obtained an useful refinement of the classical Mazur technique of constructing basic sequences, leading to a block version of this fact (see \cite[Lemma 2.4]{HKR}): every basic sequence has an asymptotically monotone block basis. This statement seems also to have been, although implicitly, witnessed by Elton and Odell in \cite[Remarks. (1)]{EO}. It is worthy to stress that asymptotic monotonicity has been proven to be very useful in many geometric and analytical problems in Banach space theory (cf. \cite{EO, HKR, MV, Si, Si2}). For instance, in \cite[Theorem 1]{EO} Elton and Odell used the fact that every Banach space contains an asymptotically monotone basic sequence in concert with Ramsey combinatorial methods to show that the unit sphere of every infinite dimensional normed space contains, for some $\varepsilon>0$, a sequence $(x_n)$ such that $\| x_n - x_m\| > 1 + \varepsilon$ for all $n\neq m$. 

With regarding asymptotic pre-monotonicity, it was open as to whether one could also obtain in every Banach space an asymptotically pre-monotone basic sequence (i.e. $\| I - P_n\|\to 1$). It turns out that this is not true in general, as pointed out by Odell and Schlumprecht in \cite[p. 1349]{OS}. Nevertheless, as a byproduct of James's non-distortion theorems it follows that every Banach space containing an isomorphic copy of $\co$, also contains an asymptotically pre-monotone basic sequence (cf. \cite[Theorem 8]{DLT}). 

A natural question to be addressed is whether a selection principle for asymptotically monotone sequences can be obtained. In this note we remark that every seminormalized weakly null sequence admits an asymptotically monotone basic subsequence. This result is probably well-known to experts but we were unable to locate a reference. It is worth noting that there is a lot of spaces containing seminormalized weakly null sequences. For example, non-Schur Banach spaces (because of Rosenthal's $\ell_1$-theorem) and infinite dimensional reflexive Banach spaces (due to the theorem of Josefson-Nissenzweig, see e.g. \cite{D}, p. 219). 

Our next goal concerns the existence of asymptotically monotone basic sequences which are symmetrically well-separated in unit spheres of Banach spaces containing isomorphic copies of $\ell_1$. This topic has been extensively dealt in several contemporary and recent works (see \cite{CP, K, HKR} and references therein). In Section \ref{sec:3}, we will prove our first result. In Section \ref{sec:4} we shall show the existence of symmetrically $2$-separated sequences of unit vectors of a Banach space with certain prescribed conditions related to monotonicity and nearly isometric renormings. In particular, we strengthen Propositions $5.1$ in \cite{HKR}. We end this note in Section \ref{sec:6} with some concluding remarks.


\section*{Acknowledgments}
A significant part of this note was done when the author was attending the AMS Joint Mathematics Meeting 2019 in Baltimore, MD. The work was completed when the author was visiting UCF -- University Central of Florida, January 15--17, 2019. He would like to thank Professors Chris Lennard and Torrey Gallagher for their invitation to give a talk at JMM-2019, Professor Eduardo Teixeira for his invitation to visit the UCF and the entire Mathematics Department for its hospitality and very pleasant working environment. He is also particularly grateful to Professor Tommaso Russo for carefully reading the manuscript, for many valuable remarks related to well-separated unit sets and for the comments leading to a negative answer to the Question \ref{qtn:Q} in the last section of this paper.


\section{Preliminaries}\label{sec:2}
The notation used throughout this paper is quite standard and mostly follows \cite{FHHMZ}. For a Banach space $X$ we denote by $B_X$ the closed unit ball and $S_X$ its closed unit sphere. For an infinite set $N\subset \mathbb{N}$, $[N]^k$ stands for the $k$-element subsets of $N$. By $[N]^\omega$ and $[N]^{<\omega}$ we mean respectively the family of all infinite and finite subsets of $N$. Given sets $E, F\in[\mathbb{N}]^{<\omega}$, we shall use the notation $E\leq F$ (resp. $E<F$) to mean that $\max(E)\leq \min(F)$ (resp. $\max(E)< \min(F)$). In particular, $n\leq F$ means $\{n\}\leq F$. Recall that a basic sequence $(x_n)$ in $X$ is called seminormalized when there are constants $A, B>0$ so that $A\leq \| x_n\| \leq B$ for all $n\in \mathbb{N}$. A block basis of $(x_n)$ is a sequence of the form $z_n= \sum_{i\in E_n} \alpha^n_i x_i$, where $(\alpha^n_i)_{i\in E_n}\subset \mathbb{R}$ and $(E_n)$ is an increasing sequence of block of integers, that is, $E_n\in [\mathbb{N}]^{<\omega}$ and $E_n< E_{n+1}$ for all $n\in \mathbb{N}$. Every block basis of a basic sequence is a basic sequence. 

 
\medskip 
\section{Asymptotic monotone basic sequences} \label{sec:3}
Our first result is the following strengthening of Bessaga-Pe\l czy\'nski's selection principle.

\begin{lemma}\label{thm:M1} Let $X$ be an infinite dimensional Banach space. Then every seminormalized weakly null sequence in $X$ has an asymptotically monotone basic subsequence. 
\end{lemma}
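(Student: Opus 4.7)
The plan is to refine the standard Mazur/Bessaga--Pe\l czy\'nski extraction by choosing the ``net'' errors to be summable, so that the projection constants not only stay bounded, but converge to $1$. Fix a summable sequence of positive reals $(\delta_n)_{n\in\N}$, say $\delta_n = 4^{-n}$, and let $A,B>0$ be the seminormalization constants of the given sequence $(x_n)$, so $A\leq\|x_n\|\leq B$ for all $n$.

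Proceed by induction to extract a subsequence $(y_n)=(x_{k_n})$. Set $y_1=x_1$. Once $y_1,\dots,y_n$ have been chosen, let $F_n=\spn\{y_1,\dots,y_n\}$, and pick a finite $\delta_n$-net $\{f_1,\dots,f_{m_n}\}$ of the unit sphere $S_{F_n}$. For each $f_j$ choose a norming functional $f_j^*\in S_{X^*}$ with $f_j^*(f_j)=1$. Since the tail of $(x_k)$ is weakly null, we may select $y_{n+1}$ far enough along so that
\[
\max_{1\leq j\leq m_n}|f_j^*(y_{n+1})|\;<\;\frac{A\,\delta_n}{3}.
\]

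To see the effect of this choice, fix $f\in F_n$ with $\|f\|=1$ and $\lambda\in\R$, and pick $f_j$ in the net with $\|f-f_j\|<\delta_n$. If $|\lambda|\geq 3/A$, the lower bound $\|f+\lambda y_{n+1}\|\geq|\lambda|A-1\geq 2$ is immediate. Otherwise,
\[
\|f+\lambda y_{n+1}\|\;\geq\;f_j^*(f+\lambda y_{n+1})\;\geq\;1-\delta_n-|\lambda|\cdot\tfrac{A\delta_n}{3}\;\geq\;1-2\delta_n.
\]
By homogeneity this yields $\|f\|\leq(1-2\delta_n)^{-1}\|f+\lambda y_{n+1}\|$ for every $f\in F_n$ and $\lambda\in\R$; that is, the projection from $F_{n+1}$ onto $F_n$ has norm at most $(1-2\delta_n)^{-1}$. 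Iterating across finitely many steps and then passing to the closed span, the $n$-th partial sum projection satisfies
\[
\|P_n\|\;\leq\;\prod_{i=n}^{\infty}(1-2\delta_i)^{-1}.
\]
Since $\sum_i\delta_i<\infty$, the tail product converges to $1$, so $\|P_n\|\to 1$, proving asymptotic monotonicity. In particular the first inequality (at $n=1$) shows the global basic constant is finite, so $(y_n)$ is a genuine basic sequence.

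The main subtlety -- essentially the only thing beyond the classical argument -- is the step where $|\lambda|$ is controlled. It is precisely here that the seminormalization hypothesis $\|x_n\|\geq A>0$ is used, to turn the lower bound $|\lambda|A-1$ into a serviceable estimate when $|\lambda|$ is large, and to allow the choice $\delta_n'=A\delta_n/3$ for the functional values when $|\lambda|$ is small. Without this, weak nullity alone would not suffice to force the tail projections to have norms converging to $1$. Everything else is a standard diagonal extraction against the countably many functionals produced at each stage.
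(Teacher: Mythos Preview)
Your argument is correct. The one-step estimate $\|f\|\leq (1-2\delta_n)^{-1}\|f+\lambda y_{n+1}\|$ for $f\in F_n$ is properly justified by the case split on $|\lambda|$, and the telescoping to $\|P_n\|\leq \prod_{i\geq n}(1-2\delta_i)^{-1}\to 1$ is the right conclusion. (A cosmetic remark: your closing sentence about a ``diagonal extraction against the countably many functionals'' is misleading---at each stage there are only finitely many functionals $f_1^*,\dots,f_{m_n}^*$, and weak nullity lets you pick a single $y_{n+1}$ making them all small simultaneously; no diagonalization actually occurs in your proof.)

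The paper takes a different route. Rather than running a single refined Mazur extraction with summable errors, it treats the Bessaga--Pe\l czy\'nski selection principle as a black box and applies it repeatedly: for each $j$ it freezes the first $j$ chosen vectors and re-runs the selection on the remaining tail to force the $j$-th partial sum constant below $1+\varepsilon_j$, then passes to the diagonal of the resulting nested family of subsequences. Your approach is more direct and self-contained---one pass, no diagonal argument, and the asymptotic monotonicity drops out of the tail of a convergent product. The paper's approach is more modular, leaning on the published selection principle at each stage, at the cost of the extra bookkeeping of the diagonalization. Both ultimately rest on the same Mazur-type lemma, but yours unwinds it while the paper quotes it.
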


\begin{proof} The proof involves combining Bessaga-Pe\l czy\'nski's selection approach together with a diagonal argument. Let $(y_i)$ be a seminormalized weakly null sequence in $X$ and fix $\varepsilon_n\searrow 0$ a decreasing null-sequence in $(0,1)$. We proceed by induction on $j=1, 2, 3, \dots$. 

Define $x_{n^0_i}= y_i$ for all $i\in \mathbb{N}$. Let $(x_{n^1_i})$ be  a $(1 + \varepsilon_1)$-basic subsequence of $(x_{n^0_i})$ given by Pe\l czy\'nski's selection principle (cf. \cite{Pel}). Accordingly, we have $n^1_1=1$ and 
\[
\Bigg\| \sum_{i=1}^m a_i x_{n^1_i}\Bigg\|\leq (1 + \varepsilon_1) \Bigg\| \sum_{i=1}^n a_i x_{n^1_i}\Bigg\|
\]
for all integers $m\leq n$ in $\mathbb{N}$ and for all scalars $(a_i)_{i=1}^\infty$ in $\coo$ (cf. \cite[p. 372]{Pel}). 

Now assume for some $j\geq 1$ that basic sequences $\big\{ (x_{n^k_i})_{i\in\mathbb{N}}\big\}_{k=1}^j$ have been already obtained so as to satisfy the next three properties, for all $k=1,\dots, j$: 
\begin{itemize}
\item $(x_{n^{k}_i})_{i\in \mathbb{N}}$ is a subsequence of $(x_{n^{k-1}_i})_{i\in \mathbb{N}}$;
\item $n^{k}_1=n^{1}_1$ if $k=1$,\, and $n^k_1= n^1_1,\, n^{k}_2= n^{2}_2,\,\dots, \,n^{k}_{k-1}= n^{k-1}_{k-1}$ for $k\geq 2$;
\item for all scalars $(a_i)_{i=1}^\infty$ in $\coo$, one has
\[
\Bigg\| \sum_{i=1}^{k} a_i x_{n^{k}_i}\Bigg\| \leq (1 + \varepsilon_{k}) \Bigg\| \sum_{i=1}^n a_i x_{n^{k}_i}\Bigg\|,\quad \forall \, n\geq k.
\]
\end{itemize}
Let $E=\spn\{ x_{n^{1}_1}, x_{n^{2}_2}, \dots, x_{n^{j-1}_{j-1}}, x_{n^{j}_{j}}\}$ and take a decreasing null sequence $(\delta_i)_i$ in $(0,1)$ satisfying
\[
\prod_{i=1}^\infty (1 + \delta_i)< 1+ \varepsilon_{j+1}.
\]
Clearly the sequence $(x_{n^j_i})_{i=1}^\infty$ is seminormalized and weakly null. From the proofs in \cite[Proposition and lemma]{Pel} we obtain an integer $\kappa(j,1)> j$ so that  
\[
\| e\| \leq (1 + \delta_{1})\big \| e + t x_{n^j_{\kappa(j,1)}}\big\|
\]
for all $e\in E$ and $t\in \mathbb{R}$. Let $\mathcal{K}=\prod_{i=2}^N(1 + \delta_i)$. By making an iterated use of the arguments developed in \cite[Proposition and lemma]{Pel} we obtain an increasing subsequence $(n^j_{\kappa(j,i)})_{i=1}^\infty$ of $\{ n^j_i\}_{i=1}^\infty$ such that
\[
\Bigg\| \sum_{i=1}^j a_i x_{n^i_i} + a_{j+1}  x_{n^j_{\kappa(j,1)}}\Bigg\| \leq \mathcal{K} \Bigg\|  \sum_{i=1}^j a_i x_{n^i_i} + a_{j+1}  x_{n^j_{\kappa(j,1)}}+ \sum_{i=j+2}^N a_i x_{n^j_{\kappa(j,i)}}\Bigg \|
\]
for all $N\geq j+2$ and for all scalars $(a_i)_{i=1}^\infty \in \coo$. We then define $(x_{n^{j+1}_i})$ as follows:
\[
x_{n^{j+1}_i}=\left\{
\begin{split}
&x_{n^i_i} \hskip 1.25cm \text{ for } i=1,\dots, j\\
&x_{n^j_{\kappa(j, i-j)}}  \hskip .4cm \text{ for } i\geq j+1.
\end{split}\right.
\]
The induction process succeeds. The above construction provides a countable family of basic subsequences  $(x_{n^k_i})_{k, i\in \mathbb{N}}$ of $(y_n)$ such that
\[
\Bigg\| \sum_{i=1}^k a_i x_{n^i_i}\Bigg\| \leq ( 1 + \varepsilon_{k}) \Bigg\| \sum_{i=1}^k a_i x_{n^i_i}  + \sum_{i=k+1}^\infty a_i x_{n^{k+1}_i}\Bigg\|
\]
for all $k\in \mathbb{N}$ and for all $(a_i)_{i=1}^\infty \in \coo$. 

Let us take now the diagonal sequence $(x_i)=(x_{n^i_i})$. Since $\{ n^i_i\}_{i=k+1}^\infty \subset \{ n^{k+1}_i\}_{i=1}^\infty$ for all $k\in \mathbb{N}$, we can make null those scalar terms $a_i$'s that are outside the diagonal and conclude directly from the previous inequality that $(x_i)$ is asymptotically monotone.
\end{proof}

\medskip 
\section{Symmetric $2$-separated sequences}\label{sec:4}
In connection with the ongoing study on the structure of well-separated subsets of the unit sphere of a Banach space the next question seems to be reasonable:

\begin{qtn}\label{qtn:1} Let $\varepsilon>0$. Under which conditions a Banach space $X$ admits a $(1+\varepsilon)$-equivalent renorming $\nn{\cdot}$ so that $(X, \nn{\cdot})$ has a normalized asymptotically monotone basic sequence being symmetrically $2$-separated?
\end{qtn}

A subset $A$ of a normed space is said to be symmetrically $\delta$-separated (see \cite{HKR}) when $\| x\pm y\| \geq \delta$ for any distinct elements $x, y\in A$ ($\delta>0$). Pioneer works along the study of well-separated sets in the unit sphere of Banach spaces include J. Elton and E. Odell \cite{EO} and C. Kottman \cite{K}. We refer the reader to \cite{HKR} for a recent and more detailed account about this topic. Motivation for Question \ref{qtn:1} comes from the problem of finding symmetrically $(1+\varepsilon)$-separated sequences of unit vectors under renorming techniques (see \cite[Section 5]{HKR}). In \cite[Theorem 7]{K} Kottman proved that every infinite-dimensional Banach space admits a renorming so that the new unit sphere contains a $2$-separated sequence.  In \cite[p.13]{HKR} the authors observed that Kottman's argument yields in fact a symmetrically $2$-separated sequence of norm-one vectors. However, the norm obtained in \cite{HKR} is only $2$-equivalent to the original one. Then, in \cite[Proposition 5.2]{HKR} they proved for every $\varepsilon>0$, that every Banach space $X$ admits an equivalent norm $\nn{\cdot}$ which is $(1+\varepsilon)$-equivalent to the original norm of $X$ and yet $S_{(X, \nn{\cdot})}$ contains an infinite symmetrically $(1+\delta)$-separated subset, for some $\delta>0$. They finally observed in \cite[Remark 5.3]{HKR} that every separable Banach space $X$ admits a strictly convex renorming $\nn{\cdot}$ so that the unit sphere of $S_{(X,\nn{\cdot})}$ contains no $2$-separated sequences. 

\smallskip 
Inspired by Proposition 5.1 in \cite{HKR} we remark the following.

\begin{prop}\label{prop:1sec4} Let $X$ be a Banach space and $Y$ a subspace of $X$. Assume that $\{ x_i; f_i\}_{i=1}^\infty$ is a biorthogonal system on $Y$ such that, for some  $\varepsilon\in (0,1)$, one has 
\[
\| x_i\| \leq 1 + \varepsilon\quad\text{and}\quad \sup_{i\neq j}\big( | f_i(y)| + | f_j(y)|\big)\leq (1 + \varepsilon) \| y\|
\]
for all $i\in \mathbb{N}$ and for all $y\in Y$. Then there is a $(1+\varepsilon)$-equivalent norm $\nn{\cdot}$ on $X$ such that $S_{(X, \nn{\cdot})}$ contains a symmetrically $2$-separated sequence. 
\end{prop}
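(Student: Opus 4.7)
The plan is to build $\nn{\cdot}$ as a maximum of the original norm (suitably rescaled) and an auxiliary seminorm derived from Hahn--Banach extensions of the functionals $\epsilon_1 f_i+\epsilon_2 f_j$. Observe first that the hypothesis $\sup_{i\neq j}(|f_i(y)|+|f_j(y)|)\leq(1+\varepsilon)\|y\|$ implies
\[
\|\epsilon_1 f_i+\epsilon_2 f_j\|_{Y^*}\leq 1+\varepsilon\qquad(i\neq j,\ \epsilon_1,\epsilon_2\in\{\pm 1\}).
\]
So by Hahn--Banach each such functional admits an extension $g_{i,j,\epsilon_1,\epsilon_2}\in X^*$ with $\|g_{i,j,\epsilon_1,\epsilon_2}\|_{X^*}\leq 1+\varepsilon$.

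With these extensions in hand, I would define the seminorm
\[
\phi(x):=\sup_{i\neq j,\,\epsilon_1,\epsilon_2\in\{\pm 1\}}|g_{i,j,\epsilon_1,\epsilon_2}(x)|\qquad(x\in X),
\]
which satisfies $\phi(x)\leq (1+\varepsilon)\|x\|$ on all of $X$, while for $y\in Y$ it reduces to $\phi(y)=\sup_{i\neq j}(|f_i(y)|+|f_j(y)|)$. The candidate renorming is then
\[
\nn{x}:=\frac{1}{1+\varepsilon}\max\bigl\{\|x\|,\,\phi(x)\bigr\}.
\]
The $(1+\varepsilon)$-equivalence with $\|\cdot\|$ follows immediately from the two-sided inequality $\|x\|/(1+\varepsilon)\leq\nn{x}\leq\|x\|$. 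Moreover, by the biorthogonal relations one has $\phi(x_k)=1$ and $\phi(x_k\pm x_\ell)\geq 2$ whenever $k\neq\ell$ (the suitable pair $(i,j)=(k,\ell)$ with the appropriate signs saturates the supremum), while $1/(1+\varepsilon)\leq\|x_k\|\leq 1+\varepsilon$ forces $\nn{x_k}\in[1/(1+\varepsilon),1]$.

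To produce the symmetrically $2$-separated sequence I would set $u_k:=x_k/\nn{x_k}$, so that $\nn{u_k}=1$, and then evaluate the relevant extension at the sum and difference: linearity gives
\[
g_{k,\ell,+1,\pm 1}(u_k\pm u_\ell)=\frac{1}{\nn{x_k}}+\frac{1}{\nn{x_\ell}}\geq 2,
\]
which combined with the factor $1/(1+\varepsilon)$ in $\nn{\cdot}$ yields the lower estimate on $\nn{u_k\pm u_\ell}$. The main obstacle is the precise tightness of the $(1+\varepsilon)$-equivalence against the exact $2$-separation: a direct computation only delivers $\nn{u_k\pm u_\ell}\geq 2/(1+\varepsilon)$ unless one can arrange $\nn{x_k}=1/(1+\varepsilon)$ for all $k$. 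I expect this is handled by a preliminary pigeonholing step, passing to a subsequence along which $\|x_k\|\leq 1$ (so that the max in the definition of $\nn{x_k}$ is attained at the $\phi$-term), or, when no such subsequence exists, by an initial rescaling of the biorthogonal system that reduces matters to this case, taking care that the hypotheses on $\|x_i\|$ and on the sum-bound remain intact after the rescaling. This bookkeeping is the delicate part of the argument; the rest is a direct verification.
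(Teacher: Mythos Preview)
Your Hahn--Banach extension idea is sound and in fact more direct than the paper's route, which defines the analogous norm only on $Y$ and then appeals to an extension lemma of Johnson--Odell to push it out to all of $X$. The ``obstacle'' you flag, however, is self-inflicted: it vanishes once you move the factor $1/(1+\varepsilon)$ inside the maximum, onto the $\|\cdot\|$ term alone, exactly as the paper does on $Y$. With
\[
\nn{x}:=\max\Bigl\{\tfrac{1}{1+\varepsilon}\|x\|,\ \phi(x)\Bigr\}
\]
one still has $\tfrac{1}{1+\varepsilon}\|x\|\leq\nn{x}\leq(1+\varepsilon)\|x\|$, but now $\|x_k\|\leq 1+\varepsilon$ together with $\phi(x_k)=1$ gives $\nn{x_k}=1$ on the nose---no normalisation step is needed---and $\nn{x_k\pm x_\ell}\geq\phi(x_k\pm x_\ell)\geq 2$ follows immediately from biorthogonality.

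The subsequence-and-rescaling manoeuvre you outline is therefore unnecessary. It would also be awkward to carry out cleanly: in the case where no subsequence satisfies $\|x_k\|\leq 1$, rescaling the system to force this spoils the constant in the sum-bound hypothesis by an extra factor of $(1+\varepsilon)$, so one ends up with a weaker equivalence than the one-line fix above already delivers.
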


\begin{proof} Indeed, to see this we first define a new norm on $Y$ by 
\[
|y| =\max\Bigg( \frac{1}{1 + \varepsilon}\| y\|, \sup_{i\neq j}\big( | f_i(y)| + | f_j(y)|\big)\Bigg)\quad\text{for }\, y\in Y. 
\]
Clearly we have
\[
\frac{1}{1 + \varepsilon}\|y\| \leq | y| \leq (1 + \varepsilon)\| y\|
\]
for all $y\in Y$, and hence $| \cdot|$ is $(1 + \varepsilon)$-equivalent to $\| \cdot\|$ on $Y$. Moreover, notice that $| x_i| = 1$ and $| x_i \pm x_j| \geq 2$ for all $i, j\in \mathbb{N}$ with $i\neq j$. Now, the proof of Lemma 2  in \cite{JO} yields an equivalent norm $\nn{\cdot}$ on $X$ satisfying $\nn{y}= | y|$ for all  $y\in Y$, and
\[
\frac{1}{(1 + \varepsilon)^2}\| x\|\leq \nn{ x}\leq (1 + \varepsilon)\| x\|\quad\text{for all } x\in X.
\]
It follows that $S_{(X, \nn{\cdot})}$ contains a symmetrically $2$-separated sequence. 
\end{proof}

\smallskip 
Proposition \ref{prop:1sec4} provides a guide for finding affirmative answers for Question \ref{qtn:1}. It is natural therefore to wonder which Banach spaces $X$ have the property: for every $\varepsilon>0$, there is a biorthogonal system $\{x_n; f_n\}_{n=1}^\infty$ in $(1+\varepsilon)B(Y)\times Y^*$, for some subspace $Y$ of $X$, so that the functionals $(f_n)_n$ satisfy 
\[
\sup_{i\neq j}\big( |f_i(x)| + |f_j(x)|\big)\leq (1+\varepsilon)\| x\|\quad\text{for all }\, x\in [(x_n)]. 
\]

\smallskip 
For the sake of simplicity,

\begin{defi} We shall call such a system as $(1+\varepsilon)$-biorthogonal system on $X$. 
\end{defi}

The next two results provide partial answers for the previous question when $X$ contains either isomorphic copies of $\ell_1$. 


\begin{prop}\label{prop:2sec4} Let $X$ be a Banach space containing a subspace isomorphic to $\ell_1$. Then for every $\varepsilon\in (0,1)$, $X$ admits a $(1+\varepsilon)$-biorthogonal system which generates a $(1+\varepsilon)$-equivalent norm $\nn{\cdot}$ under which the sphere $S_{(X, \nn{\cdot})}$ contains a bimonotone symmetrically $2$-separated basic sequence.
\end{prop}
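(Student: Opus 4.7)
I would combine James's non-distortion theorem for $\ell_1$ with the renorming strategy of Proposition \ref{prop:1sec4} and with the classical interval-projection reparametrization that forces bimonotonicity. Fix a small auxiliary $\eta\in(0,1)$, to be specified at the very end, and put $\eta'=\eta/(1-\eta)$ so that $(1-\eta)^{-1}=1+\eta'$.

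Since $X$ contains an isomorphic copy of $\ell_1$, James's non-distortion theorem supplies a sequence $(x_n)\subset S_X$ with
\[
(1-\eta)\sum_i|a_i|\leq \Big\|\sum_i a_i x_i\Big\|\leq \sum_i|a_i|
\]
for every finitely supported $(a_i)$. Set $Y=[(x_n)]$ and let $f_n\in Y^*$ be the coordinate functionals, $f_n(\sum a_ix_i)=a_n$. Then $\|f_n\|\leq 1+\eta'$, and, crucially,
\[
\sup_{i\neq j}\bigl(|f_i(y)|+|f_j(y)|\bigr)\leq\sum_k|a_k|\leq(1+\eta')\|y\|\qquad(y\in Y),
\]
so $\{x_n;f_n\}$ is already a $(1+\eta')$-biorthogonal system on $X$ in the sense of the preceding definition.

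Next, I would equip $Y$ with the norm
\[
|y|_0:=\max\Big(\tfrac{1}{1+\eta'}[y],\ \sup_{i\neq j}\bigl(|f_i(y)|+|f_j(y)|\bigr)\Big),\qquad [y]:=\sup_{0\leq m\leq n}\Big\|\sum_{m<i\leq n}a_ix_i\Big\|,
\]
where $y=\sum_ia_ix_i$. The first entry is the classical bimonotone reparametrization of the basic sequence $(x_n)$, scaled by $(1+\eta')^{-1}$, and the second entry is the one used in Proposition \ref{prop:1sec4}. Since $[\,\cdot\,]\leq(1+\eta')\|\cdot\|$ on $Y$, one obtains $(1+\eta')^{-1}\|y\|\leq|y|_0\leq(1+\eta')\|y\|$. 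Direct checks give $|x_n|_0=1$, $|x_i\pm x_j|_0\geq 2$ for $i\neq j$ (take $k=i,\,l=j$ in the biorthogonal entry), and the bimonotonicity of $(x_n)$ in $|\cdot|_0$: applying $P_n$ or $I-P_n$ to $y$ only zeroes out coefficients, so $[P_ny]$ and $[(I-P_n)y]$ are suprema over sub-families of intervals of the family used for $[y]$, and the biorthogonal supremum weakly decreases.

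Finally, extend $|\cdot|_0$ from $Y$ to a norm $\nn{\cdot}$ on $X$ exactly as in the proof of Proposition \ref{prop:1sec4}, via Lemma~2 of \cite{JO}: this preserves the values on $Y$ while yielding the appropriate equivalence on $X$. Choosing $\eta'$ small enough at the outset that the composite equivalence constant does not exceed $1+\varepsilon$ places $(x_n)$ on $S_{(X,\nn{\cdot})}$ as a bimonotone symmetrically $2$-separated basic sequence, as desired.

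\textbf{Main obstacle.} The core technical point is the second step: designing a single norm on $Y$ that simultaneously guarantees bimonotonicity (which is naturally expressed via interval projections) and symmetric $2$-separation (which is naturally expressed via biorthogonal functionals), while still being $(1+\eta')$-equivalent to the original norm. Taking the maximum of the two natural candidates with the proper normalizing constant is what allows them to coexist; once this is in place, the rest is parameter bookkeeping and a direct reuse of the extension lemma from \cite{JO}.
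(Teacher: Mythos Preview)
Your proposal is correct and follows the same global scaffold as the paper (James's non-distortion $\to$ a suitable norm on $Y=[(x_n)]$ $\to$ extension to $X$ via Lemma~2 of \cite{JO}), but the middle step is handled differently. You build the norm on $Y$ as a maximum of two ingredients---the interval-projection renorming $[\,\cdot\,]$ to force bimonotonicity and the biorthogonal supremum $\sup_{i\neq j}(|f_i|+|f_j|)$ to force symmetric $2$-separation---and then track the constants. The paper instead transports the $\ell_1$ norm itself onto $Y$ by the infimal-convolution formula $|y|_Y=\inf\{\|x\|_{\ell_1}+\|y-Tx\|:x\in\ell_1\}$, which makes $T\colon\ell_1\to(Y,|\cdot|_Y)$ an isometry; in that norm $(x_i)$ is literally the canonical $\ell_1$ basis, so bimonotonicity and $|x_i\pm x_j|_Y=2$ are automatic, and the biorthogonal condition $\sup_{i\neq j}(|f_i(y)|+|f_j(y)|)\le|y|_Y\le\|y\|$ drops out of $|y|_Y=\sum|a_i|$. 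In other words, the ``main obstacle'' you isolate dissolves in the $\ell_1$ case: a single natural norm already does both jobs, so no maximum is needed. Your approach has the virtue of being more modular (the interval-projection device would work beyond $\ell_1$), while the paper's is shorter and exploits the specific structure of $\ell_1$ more fully.
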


\begin{proof} By James's non-distortion theorem \cite[Lemma 2.1]{J} there is an isomorphic embedding $T\colon \ell_1\to X$ such that $ \| x\|_{\ell_1}\leq \| Tx \| \leq (1 +\varepsilon)\|x \|_{\ell_1}$ for all $x\in \ell_1$, where $\| \cdot\|_{\ell_1}$ stands for the usual $\ell_1$-norm on $\ell_1$. Let $Y=T(\ell_1)$ and define a new norm on $Y$ as follows: 
\[
| y|_Y=\inf\{ \| x\|_{\ell_1} + \| y - Tx\| \colon x\in \ell_1\}\quad\text{ for } y\in Y. 
\]
It is not hard to verify that $T\colon \ell_1\to (Y, | \cdot|_Y)$ is an isometry. In addition to this, one has
\begin{equation}\label{eqn:0prop1}
\frac{1}{1+ \varepsilon}\| y\| \leq | y|_Y\leq \| y\| \quad\text{ for all } y\in Y. 
\end{equation}
Let now $(e_i)$ denote the canonical basis of $\ell_1$ and define $x_i:= T(e_i)$ for $i\in \mathbb{N}$. The fact that $T$ is an isometry from $\ell_1$ onto $(Y, | \cdot|_Y)$ clearly implies 
\begin{equation}\label{eqn:1prop1}
\Bigg| \sum_{i=1}^\infty a_i x_i\Bigg|_Y= \sum_{i=1}^\infty |a_i|\quad\text{ for all } (a_i)_{i=1}^\infty \in \coo. 
\end{equation}
Hence $(x_i)$ is a normalized bimonotone basis for $(Y, | \cdot|_Y)$. Furthermore, denoting $(f_i)$ the coefficient functionals of $(x_i)$ and using (\ref{eqn:0prop1}) in concert with (\ref{eqn:1prop1}) we conclude 
\[
 \sup_{i\neq j}\big( | f_i(y)| + | f_j(y)|\big)\leq  | y|_Y\leq \| y\|\quad\text{ for } y\in Y. 
\]
Notice that (\ref{eqn:0prop1}) also implies $\| x_i\|\leq (1 + \varepsilon)$ for all $i\in \mathbb{N}$. Thus $\{ x_n; f_n\}_{n=1}^\infty$ defines a $(1+ \varepsilon$)-biorthogonal system on $X$. By Proposition \ref{prop:1sec4} the result follows. 
\end{proof}



\section{Concluding remarks}\label{sec:6}
As we have pointed out in the introduction, Lemma \ref{thm:M1} is probably well-known to experts. We remark that part of our arguments concern a suitable use of \cite[Lemma]{Pel}, which can be seen as a {\it weakly null} version of the well-known Mazur's lemma. In turn, such a version (i.e. \cite[Lemma]{Pel}) can also be directly deduced from \cite[Lemma 1.5.1 and Proposition 1.5.4]{AK}. We also remark that Propositions \ref{prop:2sec4} strengthens \cite[Proposition 5.1]{HKR}  for the cases when $X$ contains $\ell_1$. The arguments presented in our approach were mostly inspired by the results in \cite{HKR}. 

It was remarked in \cite[Remark 5.3]{HKR} that every separable Banach space $X$ admits a strictly convex renorming $\nn{\cdot}$ so that $S_{(X, \nn{\cdot})}$ contains no $2$-separated sequences. In fact, one can even show that, for every $\varepsilon \in (0,1)$, every such $X$ admits a $(1+ \varepsilon)$-equivalent strictly convex norm for which the unit sphere contains no $2$-separated sequences. To see this take a dense sequence $\{ f_n\}_{n=1}^\infty \subset B_X$ such that $\| x\| =\sup\{ | f_n(x)| \colon x\in X\}$ for all $x\in X$. Next, define for $\delta\in (0,\sqrt{1 + \varepsilon} -1)$ a new norm on $X$ by $\nn{x}^2= \| x\|^2 + \delta \sum_{n=1}^\infty 2^{-n} f^2_n(x)$, $x\in X$. Direct calculation shows that $\nn{\cdot}$ is strictly convex and $(1+\varepsilon)$-equivalent to $\| \cdot\|$. Using then the same reasoning as in \cite[Remark 5.3]{HKR} one shows that $S_{(X, \nn{\cdot})}$ contains no $2$-separated sequences. In the opposite direction, one may naturally ask the question. 

\begin{qtn}\label{qtn:Q} Does every separable Banach space $X$ has a $(1 +\varepsilon)$-equivalent norm $\nn{\cdot}$ such $S_{(X, \nn{\cdot})}$ contains a symmetrically $2$-separated sequence?
\end{qtn}

Let us recall the {\it symmetric Kottman constant} introduced in \cite{CP}:
\[
K^s(X):=\sup\Big\{ \sigma> 0\,\colon \, \exists (x_n)_{n=1}^\infty \subset B_X \, \forall n\neq k\,\, \| x_n \pm x_k\| \geq \sigma\Big\}.
\]
The answer  for Question \ref{qtn:Q} is in general {\it no}. The main reason is that $K^s(X)$ is a continuous function on $X$, with respect to the Banach-Mazur distance (see the proof of Theorem 7 in \cite{K}). Hence, if we pick $X$ so that $K^s(X)<2$ (e.g., $\ell_p$ for $1< p< \infty$) every $(1+\varepsilon)$-renorming, with $\varepsilon$ small enough, will have symmetric Kottman constant less than $2$. Therefore, Question \ref{qtn:Q} has positive answer for $X$ exactly when $K^s(X)=2$. 

The problem of describing the class of Banach spaces $X$ for which $K^s(X)=2$ has been considered by several authors. In \cite{HKR} the authors showed that if $X$ contains a spreading model isomorphic to $\ell_1$ then $K^s(X)=2$. The corresponding problem for $\co$ spreading models was left as an open problem \cite[Problem 4.11]{HKR}. It is not hard to show that $K^s(T^*)=2$, where $T^*$ is the {\it original} Tsirelson space (cf. \cite{FJ} for basic definition and properties of Tsirelson's spaces). On the other hand, it is a well-known fact that every spreading model of $T^*$ is isomorphic to $\co$. These facts led us to believe that $K^s(X)=2$ for every Banach space $X$ that admit $\co$ as a spreading model. We observe, however, that the assumption on a Banach space $X$ to admit a spreading model isomorphic to $\co$ is not sufficient to imply that $K^s(X)=2$. We thank Professor Tommaso Russo for informing us privately about this and many other of his recent achievements on the matter.


\nocite{*}

\end{document}